\numberwithin{equation}{section}
\theoremstyle{plain}
\newtheorem{theorem}{Theorem}[section]
\newtheorem{lemma}[theorem]{Lemma}
\newtheorem{corollary}[theorem]{Corollary}
\theoremstyle{definition}
\newtheorem{definition}[theorem]{Definition}
\title{Term inequalities in finite algebras}
\newcommand{\pth}{\mathop{\rm path}}
\newcommand{\A}{{\mathbf A}}
\newcommand{\Alg}{\mathop{\rm Alg}}
\newcommand{\length}{\mathop{\rm length}}
\begin{document}

\title[Term inequalities in finite algebras]{Term inequalities in finite algebras}


\author[D. Hobby]{David Hobby} 

\email{hobbyd@newpaltz.edu}

\address{Mathematics, SUNY New Paltz\\
1 Hawk Drive\\
New Paltz, NY, 12561\\
USA}




\subjclass[2010]{Primary: 08A99; Secondary: 03C13,20N02,68N17.}


\keywords{antiassociative, unification, separation of terms}
\begin{abstract}
Given an algebra \ $\mathbf{A}$, \ and terms $s(x_{1},x_{2},\dots x_{k})$
and $t(x_{1},x_{2},\dots x_{k})$ of the language of $\A$,
we say that $s$ and $t$ are {\em separated} in $\A$ iff for
all \ $a_{1},a_{2}\dots a_{k}\in A$, \ $s(a_{1},a_{2},\dots a_{k})$
\ and \ $t(a_{1},a_{2},\dots a_{k})$ are never equal. We prove
that given two terms that are separated in any algebra, there exists
a finite algebra in which they are separated. As a corollary, we obtain
that whenever the sentence $\sigma$ is a universally quantified conjunction
of negated atomic formulas, $\sigma$ is consistent iff it has a finite
model. 
\end{abstract}

\maketitle


\begin{section}{Introduction}\par Around fifteen
years ago, the author was involved with D. Silberger in an investigation
of finite groupoids which we called antiassociative. Instead of obeying
the associative law that \ $(x_{1}\star x_{2})\star x_{3}$ \ and
\ $x_{1}\star(x_{2}\star x_{3})$ \ were always equal, a groupoid
was {\em antiassociative} iff \ $(x_{1}\star x_{2})\star x_{3}$
\ and \ $x_{1}\star(x_{2}\star x_{3})$ \ were never equal. We
viewed this as a natural change to make to the associative law.

Working with M. Braitt, we generalized this to the problem of finding finite
groupoids that were {\em k-antiassociative}, meaning that all of
the distinct terms made by inserting parentheses in the string $x_{1}\star x_{2}\star x_{3}\star\dots x_{k}$
were never equal. This reduces to the problem of constructing a finite
algebra that separates two distinct terms, where we say that the algebra
\ $\mathbf{A}$, {\em separates} the terms $s(x_{1},x_{2},x_{3},\dots x_{k})$
and $t(x_{1},x_{2},x_{3},\dots x_{k})$ iff for all \ $a_{1},a_{2}\dots a_{k}\in A$,
\ $s(a_{1},a_{2},\dots a_{k})$ \ and \ $t(a_{1},a_{2},\dots a_{k})$
are never equal. This is because a product of algebras separates all
of the pairs of terms separated in any of its factors.

Once we were looking at separating pairs of terms, it was natural to generalize
this to groupoid terms $s$ and $t$ which had their variables appearing
arbitrarily often in arbitrary orders. We solved the problem for many
pairs of groupoid terms $s$ and $t$ in \cite{BHS}, showing
that if $s$ and $t$ were ever separated in any groupoid, they were
separated in a finite one.

Our terminology is standard for modern
universal algebra. The reader is referred to \cite{BurrisSankappanavar}
by Burris and Sankappanavar for undefined terms and notation. Our
algebras will have possibly infinitely many basic operations, all
of finite arity. Constants are allowed, and are $0$-ary operations.
The language of an algebra has symbols for all of its basic operations.
Terms are built up by applying basic operations to variable symbols.
We will use the same notation for terms as expressions in a language
and for the term functions obtained by interpreting terms in an algebra.

It is sometimes useful to adopt the convention that terms may be written
with extra variables that do not actually appear in them, so writing
$s(x_{1},x_{2},\dots x_{k})$ and $t(x_{1},x_{2},\dots x_{k})$ indicates
that the variables used in $s$ and $t$ are subsets of $\{x_{1},x_{2},\dots x_{k}\}$.

Observe that terms $s(x_{1},x_{2},\dots x_{k})$ and $t(x_{1},x_{2},\dots x_{k})$
are separated in free algebras iff they are ever separated in any
algebra. For suppose $\A$ separates $s$ and $t$, while $s(b_{1},b_{2},\dots b_{k})=t(b_{1},b_{2},\dots b_{k})$
for some $b_{1},b_{2},\dots b_{k}$ in the universe of a free algebra
${\mathcal{F}}$. Then letting $\phi\colon{\mathcal{F}}\to\A$, we
have $s(\phi(b_{1}),\phi(b_{2}),\dots\phi(b_{k}))=t(\phi(b_{1}),\phi(b_{2}),\dots\phi(b_{k}))$
in $\A$, a contradiction. Free algebras are usually infinite, so
it is natural to ask if there are finite algebras separating $s$
and $t$.

Let us look at $s(x_{1},x_{2},\dots x_{k})$ and $t(x_{1},x_{2},\dots x_{k})$
in ${\mathcal{F}}={\mathcal{F}}(x_{1},x_{2},\dots x_{k})$, the free
algebra on $x_{1},x_{2},\dots x_{k}$ in the language of $s$ and
$t$. If the terms $s$ and $t$ are not separated in ${\mathcal{F}}$,
then there are $b_{1},b_{2},\dots b_{k}$ in the universe of ${\mathcal{F}}$
where $s(b_{1},b_{2},\dots b_{k})=t(b_{1},b_{2},\dots b_{k})$. In
other words, there are terms $b_{1},b_{2},\dots b_{k}$ so that substituting
$b_{1}$ for $x_{1}$, $b_{2}$ for $x_{2}$, and so on in both $s$
and $t$ turns them both into the same term. This is commonly referred
to as {\em unifying} the terms $s$ and $t$.

The unification problem has been extensively studied in computer science. The introduction
of the topic was by Herbrand, in \cite{Herbrand}. Modern work was
pioneered by Robinson, in \cite{Robinson}. A good survey article is
\cite{BaaderSnyder} by Baader and Snyder. Our proof will proceed
by careful analysis of a deduction system for unification.

Two terms $s$ and $t$ are {\em unifiable}
if they can be unified, and the corresponding substitution of terms
for their variables is a {\em unification}. 
We will prove that whenever terms $s$ and $t$ can be separated in
any algebra, that they can be separated in a finite algebra. We have
that two terms can be separated in any algebra iff they can be separated
in a free algebra iff they can not be unified. So we need to establish
that terms can not be unified iff they are separated in a finite algebra.
One direction of this follows immediately from the above, so it remains
to show that whenever two terms can not be unified, that they are
separated in a finite algebra. Doing this requires a careful analysis
of a unification algorithm, so that we can construct the desired finite algebra.

Algorithms to see whether or not
two terms $s$ and $t$ can be unified are discussed in detail in
\cite{BaaderSnyder}. Since we are not concerned
with efficiency, we will use an algorithm based on unordered application
of deduction rules. (This is called tree-based syntactic unification.)

We will start with a fairly abstract description of unification from
\cite{BaaderSnyder}, and recover a set of rules from it that will
work well in our proofs.  Consider two terms $s(x_1, \dots x_{m})$ 
and $t(y_1, \dots y_{n})$.  
The terms are {\em unifiable} if there are terms 
$r_1,\dots r_{m}$ and $u_1, \dots u_{n}$ so that substituting
the $r_i$ for the $x_i$ in $s$ and the $u_j$ for the $y_j$ in $t$
makes the two resulting terms identical.  This identical term is a {\em unifier} and the corresponding substitution is a {\em unification}.  In other words, the terms
$s$ and $t$ can be unified iff they can not be separated in a free algebra.  

While much work on unification is concerned with algorithms, we will follow the more abstract approach in \cite{BaaderSnyder},
which was first presented by G.P. Huet in \cite{Huet}.
As in its Definition 2.11, we consider equivalence relations on groupoid terms, which we call {\em term relations}.  
A term relation is said to be {\em homogeneous} if no terms of the form 
$f(\dots)$ and $g(\dots)$ are ever equivalent for distinct operation symbols $f$ and $g$.  

Given a term relation $\equiv$, we let $\prec$ be the ``is equivalent to a subterm of'' relation.
That is, $p \prec q$ iff there is some subterm $r$ of $q$ with $p \equiv r$.  We say that a
sequence $p_0 \prec p_1 \prec \dots p_m$ is a {\em $\prec$-cycle} iff $p_m = p_0$ and at least 
one of the subterm relationships is proper.  Then the term relation $\equiv$ is 
{\em acyclic} iff it has no $\prec$-cycles.  
(This follows the definition of `acyclic' used in papers such as \cite{Dwork}.  The definition in
\cite{BaaderSnyder} merely says that a term relation is `acyclic' if no term is equivalent to one of its proper subterms, which is incorrect.)

This leads to the following definition.

\begin{definition}\label{unification relation def}
A term relation $\equiv$ is a {\em unification relation} iff it is homogeneous, 
acyclic, and satisfies the following
{\em unification axiom}:
Whenever $s \equiv t$ where $s$ is $f(p_1, p_2, \dots p_n)$ and
$t$ is $f(q_1,q_2, \dots q_n)$ for the same $n$-ary basic operation $f$, 
then $p_1 \equiv q_1$, $p_2 \equiv q_2$, \ldots $p_n \equiv q_n$.
\end{definition}

Referring again to \cite{BaaderSnyder} for the details, we have that terms $s$ and $t$ can be unified iff 
there is a unification relation $\equiv$ with $s \equiv t$.  If there is such a unification relation, then there
is a unique minimal one, the {\em unification closure} of $s$ and $t$.  If $s$ and $t$ can be unified, they also have
a {\em most general unifier} or {\em mgu}, where any unifier of $s$ and $t$ can be obtained from their mgu by uniformly 
substituting terms for its variables.  This most general unifier is unique up to renaming its variables, and can be 
easily constructed from the unification closure of $s$ and $t$.

For example, let $\star$ be an infix binary operation, and 
consider $s = (x \star y) \star (z \star y)$ and
$t = z \star ((x \star y) \star (x \star x))$. 
We will attempt to construct 
their unification closure $\equiv$ and the corresponding mgu.  
We must have $s \equiv t$, and start with this.
Using the unification axiom, we obtain $x \star y \equiv z$ and
$z \star y \equiv (x \star y) \star (x \star x)$.  Applying the unification axiom 
again to the last equivalence, we get $z \equiv x \star y$ (a duplicate) and
$y \equiv x \star x$.  The non-singleton classes of $\equiv$ that contain variables are now
$\{ y, x \star x \}$ and $\{ z, x \star y \}$ , where $x$ is in a class by itself.

To construct the mgu, we pick a representative of each class, where we must pick 
a non-variable term if there is one in the class.  Letting $u$, $v$ and $w$ be 
arbitrary terms, we let $\varsigma(w)$ be the representative 
of the class of $w$.  In our example, this gives $\varsigma(y) = x \star x$,
$\varsigma(z) = x \star y$, and $\varsigma(x) = x$.  
Now we recursively define the function $\sigma$ 
from terms to terms by letting $\sigma(w)$ be 
$\varsigma(w)$ if $\varsigma(w)$ is a variable, and 
letting $\sigma(w)$ be $\sigma(u) \star \sigma(v)$
if $\varsigma(w)$ is $u \star v$.  In our example, this gives
$\sigma(s) = \sigma(x \star y) \star \sigma(z \star y) = 
(\sigma(x) \star \sigma(y)) \star (\sigma(z) \star \sigma(y))
= (x \star (\sigma(x) \star \sigma(x))) \star ((\sigma(x) \star \sigma(y)) 
\star (\sigma(x) \star \sigma(x)))
= (x \star (x \star x)) \star ((x \star (x \star x)) \star (x \star x))$, 
where the last term is the mgu of $s$ and $t$.       
The reader may check that applying $\sigma$ to $t$ yields the same result.

Here is an example where terms can not be unified because there is a 
$\prec$ cycle.  Let $f$ be a ternary operation, and let $g$ be binary.
Let $s = f(x,g(u,v),y)$ and $t = f(g(y,w),g(x,z),g(u,v))$.  
We start with $s \equiv t$, and get $x \equiv g(y,w)$, $g(u,v) \equiv g(x,z)$
and $y \equiv g(u,v)$ by the unification axiom.  Since $\equiv$ is transitive,
we get $y \equiv g(x,z)$, which makes $g(x,z)$ equivalent to $y$, which is a
subterm of $g(y,w)$, yielding $g(x,z) \prec g(y,w)$.  Similarly,
$g(y,w) \equiv x$ and $x$ is a subterm of $g(x,z)$, so $g(y,w) \prec g(x,z)$.
The $\prec$ cycle $g(x,z) \prec g(y,w) \prec g(x,z)$ means that there is no 
unification relation for $s$ and $t$, showing they can not be unified.
(However, $\equiv$ does not have to be a congruence, 
so no term is equivalent to one of its proper subterms.)

Now for our deduction system. These rules
generate lists of statements, $D$, where each statement comes from some 
previous statements in the list.  We will use $\equiv$ instead of $=$ in 
our statements, and start $D$ with the single statement $s \equiv t$, 
where $s$ and $t$ are terms to be unified.
(We will view $s \equiv t$ and $t \equiv s$ as the same statement, 
building in that the relation $\equiv$ is symmetric.)

In each rule, $a$, $a_{1},a_{2},a_{3}\dots$, and $b_{1},b_{2},b_{3}\dots$, and so on
are terms, $x$ and $y$ are variables, and $f$ and $g$ are function
symbols. (Function symbols may also denote constants, and will not
be listed with extra arguments that do not appear.)
We will never need to deduce statements of the form $a \equiv a$, 
so to use that $\equiv$ is an equivalence relation on terms, it is enough
to require that it be transitive.  
We will also treat nested applications of the 
unification axiom as a single step which deduces that subterms in corresponding locations are equivalent.  

To make the location of a subterm more explicit, 
we give the following definition.  
Note that a term $t$ mave have the same term $u$ occur as a 
subterm in different locations, so we should really refer to
{\em occurrences} of subterms.
If $p$ is any term and $q$ is an occurrence of 
a subterm of $p$ , then we define the {\em path}
of $q$ in $p$, $\pth(q)$, to be a string of subscripted operation
names which we define recursively as follows.  If $p$
is a constant or a single variable $x_{i}$, then that constant or
$x_{i}$ is the only subterm of $p$ and its path is the empty string
$\Lambda$. If $p$ is $f(q_{1},q_{2},\dots q_{n})$
where $f$ is an $n$-ary operation and the $q_{i}$ are occurrences 
of terms, and $r$ is an occurrence of a subterm of $q_{i}$ 
with path $\rho$, then that occurrence of $r$ is a subterm
of $p$ with path $f_{i}\rho$, where $f_{i}\rho$ is the concatenation
of $f_{i}$ and $\rho$. We let $\mathit{P}$ be the set of all possible
paths for the particular set of basic operations we are using.
Thus occurrences of subterms of two terms are in corresponding 
locations iff they have the same path in their respective terms.
For simplicity, we will henceforth refer to occurrences of 
subterms as just subterms.

There are two ways that an attempt at unification can 
fail, which we will denote by $\mathbf{False}$.  So we will have rules to 
deduce $\mathbf{False}$ from a failure of $\equiv$ to be homogeneous, and also from
a failure to be acyclic.  This gives us the following four rules.

\begin{enumerate}

\item {\em (Transitive)}  For any $n > 2$, from $a_1 \equiv a_2$, $a_2 \equiv a_3$, \ldots 
$a_{n-1} \equiv a_n$, deduce $a_1 \equiv a_n$.

\item {\em (Decompose)} From $a \equiv b$, 
deduce $c \equiv d$ whenever $c$ is a subterm of $a$ with path $\rho$,
and $d$ is a subterm of $b$ with the same path $\rho$. 

\item {\em (Conflict)} From $f(a_{1},\dots a_{n})\equiv g(b_{1},\dots b_{m})$ with $f \neq g$, deduce $\mathbf{False}$.

\item {\em (Cycle)} For any $n \geq 1$, given 
$p_1 \equiv q_1, p_2 \equiv q_2 \ldots p_{n} \equiv q_{n}$, 
where $q_1$ is a subterm of $p_2$, $q_2$ is a subterm of $p_3$, and so on up to
$q_{n}$ being a subterm of $p_{n+1} = p_1$, forming a $\prec$-cycle, 
deduce $\mathbf{False}$. 

\end{enumerate} 

One may simply apply all the rules repeatedly, until
no more statements are deduced. If $\mathbf{False}$ is ever deduced,
the original terms $s$ and $t$ can not be unified. Otherwise, a
unifying set of substitutions will be deduced. 

Our goal is to show that whenever $\mathbf{False}$
can be deduced from $s\equiv t$, that $s$ and $t$ can be separated in
a finite algebra. In the next section, we will develop general tools
for constructing these algebras.

Let a {\em deduction} be a list of statements starting with $s \equiv t$,
where each statement can be obtained from the set of previous
statements by a single application of one of the rules {\em Transitive},
{\em Decompose}, {\em Conflict} or {\em Cycle}.  We will sometimes 
add parenthetical explanations when writing inductions.

Call a deduction ending in a statement $\sigma$
{\em minimal} iff no statement in the deduction can be removed to 
yield a shorter deduction of $\sigma$.

For example, consider $s = f(g(y,z),g(y,x),x)$ and $t = f(x,g(x,z),g(y,z))$.  
We have $g(y,z) \equiv x$, $g(y,x) \equiv g(x,z)$ and $x \equiv g(y,z)$ by {\em Decompose}.
We view $g(y,z) \equiv x$ and $x \equiv g(y,z)$ as identical, so one of them is redundant. 
From $g(y,x) \equiv g(x,z)$, we get $y \equiv x$ and $x \equiv z$
by {\em Decompose}, either of which can be 
used with $g(y,z) \equiv x$ and {\em Transitive}
to allow {\em Cycle} to deduce $\mathbf{False}$.

The deduction $\langle s \equiv t, g(y,z) \equiv x, g(y,x) \equiv g(x,z), x \equiv g(y,z), 
y \equiv x, x \equiv z, g(y,z) \equiv y, \mathbf{False} \rangle$ 
is not minimal, but it can be reduced to minimal deductions
of $\mathbf{False}$ such as 
$\langle s \equiv t, g(y,z) \equiv x, x \equiv y 
\mbox{(on the path $f_2g_1$)},
g(y,z) \equiv y, \mathbf{False} \rangle$ or
$\langle s \equiv t, x \equiv z \mbox{(on the path $f_2g_2$)},
x \equiv g(y,z), g(y,z) \equiv z, \mathbf{False} \rangle$, 
both of which are minimal.

\section{Tools for constructing algebras}

We will use a somewhat involved construction, and will
require some preliminary definitions.  Recall that the 
{\em path} of a subterm was defined in the previous section.

To help visualize terms, we can represent them as
rooted trees where interior nodes are labeled with operations and
leaves are labeled with variables or constants. For example, suppose
$f$ is a ternary operation, $g$ is binary, $c$ is a constant, and
$u$, $v$, $w$, $x$, $y$ and $z$ are variables. We let $s=f(g(u,v),f(w,x,f(u,v,w)),c)$
and let $t=f(g(v,v),f(w,w,g(y,z)),c)$ . Then the subterm of $s$
with path $f_{1}g_{1}$is $u$, and the subterm of $t$ with path
$f_{1}g_{1}$is $v$.

\begin{figure}
\includegraphics[scale=0.85]{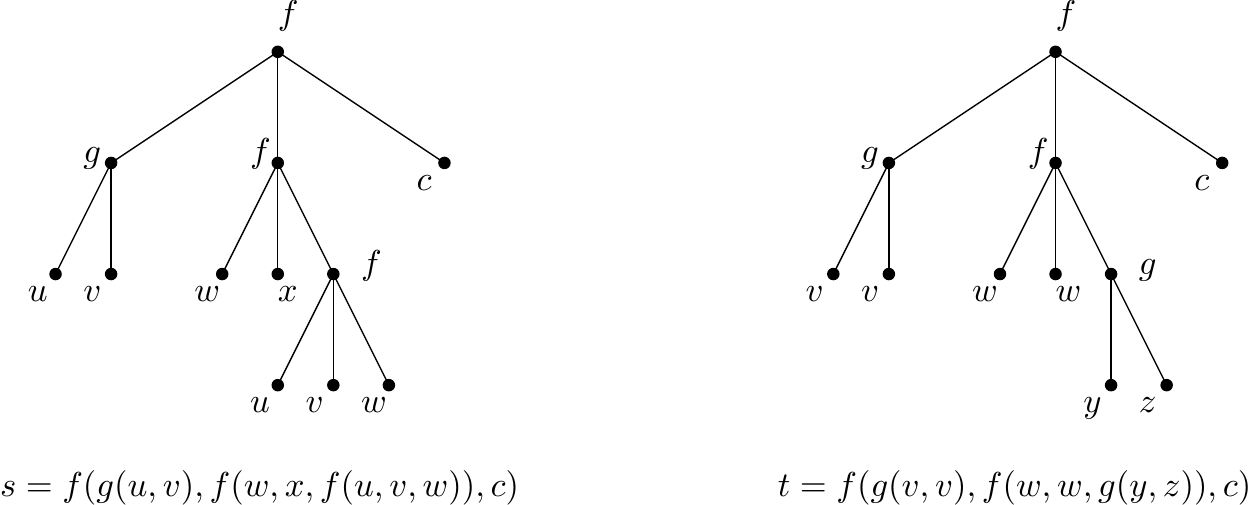}\caption{Trees for two terms}

\end{figure}

Note that the subterms of $s$ and $t$ with path $f_{2}f_{3}$ are
$f(u,v,w)$ and $g(y,z)$ respectively. Since these subterms have
different principal operation symbols, the terms $s$ and $t$ can
not be unified. Applying {\em Decompose}
to $s=t$ with path $f_2f_3$ gives $f(u,v,w) \equiv g(y,z)$, after 
which {\em Conflict} gives $\mathbf{False}$.

Our long-term goal is to form an algebra that
separates any two non-unifiable terms $s$ and $t$, such as those
in our example. We will need some preliminary ideas in order to do
this. Our algebras will have elements which are vectors 
over the $2$-element field $Z_{2}$. We take the index set
of the components of these vectors to be the set of natural numbers
$\mathbf{N}=\{0,1,2,\dots\}$. All of our vectors will be  
zero in all but finitely many components.
Given any finite set of such vectors, we let $M$ be the set of all
indices where any of the vectors is
nonzero. Then all these vectors lie
in the finite subspace consisting of vectors with all their components
outside of $M$ equal to $0$. We will usually leave this final reduction
to a finite algebra to the reader. 

We will actually be only using
the additive structure of the field $\mathbf{Z}_{2}$ , and viewing
it as an abelian group. The operations of our algebras will be sums
of linear transformations of the input vectors, sometimes with constant
elements added. Since we are working over $\mathbf{Z}_{2}$, all additions
of values will be done modulo $2$. We will periodically note this
fact, but not always. We will simply write $x_{i}$ for a vector
variable instead of $\vec{x_{i}}$, and write $x_{i}[a]$ for the
$a$-th component of the vector $x_{i}$. We will define operations
by their actions on components, and
can treat any term $t$ as a vector which denotes the value of
$t(x_1,x_2,\dots,x_n)$.  To specify the operation $f(x_{1,}x_{2},\dots x_{n})$,
it then suffices to say what $f[i]$ is for all $i$. We will do this
by giving a sequence of equations for the $f[i]$. To emphasize that
values are being assigned to the components $f[i]$, we will use $:=$
instead of the normal equality symbol. One further convention is that
each $f[i]$ will be zero, unless it is explicitly assigned a value.
With this convention, each equation corresponds to a linear transformation
from the direct product of the input vector spaces to the output vector
space, where $f[i]:=x_{j}[k]$ corresponds to the transformation that
takes the $n$-tuple of vectors
$\langle x_{1},x_{2},\dots x_{n}\rangle$ to the vector with
all components $0$ except that the $i$-th component is equal to
the $k$-th component of the vector $x_{j}$.

As an example, we will now construct an algebra
that separates our terms 
$s=f(g(u,v),f(w,x,f(u,v,w)),c)$ and $t=f(g(v,v),f(w,w,g(y,z)),c)$.
The separation will be assured because the output vectors $s$ and
$t$ will always differ on their $0$-th component, which we will arrange
as follows. The crucial difference between $s$ and $t$ is that their
subterms with path $f_{2}f_{3}$ have principal operations $f$ and
$g$, respectively. So we start our definition of $f$ by putting
the vector $\langle 0,0,0,0,\dots\rangle$ into the sum that defines
it, while putting $\langle 0,0,1,0,\dots\rangle$ into the sum that
defines $g$. So far, this makes the subterms $f(u,v,w)$ and $g(y,z)$
differ on their $2$nd component. Now we have to transfer this distinction
to the $0$-th components of $s$ and $t$, along the path $f_{2}f_{3}$.
This first application of $f$ comes via the $3$rd input, so we need
to have $f$ transfer the value in the $2$nd component to an unused
component, say the $1$st. This gives us the linear transformation
with equation $f[1]:=x_{3}[2]$. Since this transfers the value in
the $2$nd component to the $1$st component along the path $f_{3}$,
our notation for this linear transformation will be $\|2,f_{3},1\|$.
The next application of $f$ is to the $2$nd input, where we want
to take the value in the $1$st component and send it to the $0$-th
component. The equation for this is $f[0] := x_{2}[1]$, and our notation
for this linear transformation is $\| 1,f_{2},0 \|$. Adding together
the pieces we have produced, we have that 
$f=\langle 0,0,0,\dots \rangle + \|2,f_{3},1\| + \|1,f_{2},0\|$,
while $g$ is just $\langle 1,0,0,0,\dots\rangle$. 

To produce a finite
algebra that separates $s$ and $t$ , we note that only components
$0$, $1$ and $2$ are used, so the universe of our algebra can be
$\mathbf{Z}_{2}^{3}$. We define $g$ by $g(x,y) = \langle 1,0,0\rangle$,
and $f$ by 
$f(x,y,z) = \langle 0,0,0\rangle + \langle 0,0,z(0)\rangle + 
\langle 0,y(2),0 \rangle = \langle 0,y(2),z(0)\rangle$.
The constant $c$ was not used, so we can assign an arbitrary value
to it, and choose $c = \langle 0,0,0 \rangle$. To confirm that this works,
we compute $s[0]$ and $t[0]$. We have $s=f(g(u,v),f(w,x,f(u,v,w)),c)$,
so $s[0]$ gets its value from $f$, actually from $\| 1,f_{2},0 \|$,
which is the only piece of $f$ that assigns a value to the $0$-th
component. Thus $s[0]$ is $f(w,x,f(u,v,w))[1]$. To find the value
of this, we use that the only piece of $f$ assigning a value to the
$1$st component is $\| 2,f_{3},1 \|$, and get 
$f(w,x,f(u,v,w))[1] = f(u,v,w)[2]$.
No piece of $f$ assigns a value to the $2$nd component, so $f(u,v,w)[2] = 0$,
the default value. Thus $s[0] = f(w,x,f(u,v,w))[1] = f(u,v,w)[2] = 0$.
Similarly, $t=f(g(v,v),f(w,w,g(y,z)),c)$ gives us 
$t[0] = f(w,w,g(y,z))[1] = g(y,z)[2] = 1$.

Note that we can treat the sum $\| 2,f_{3},1 \| + \| 1,f_{2},0 \|$ in $f$
as a single transformation, that takes the $2$nd component of the
subterm with path $f_{2}f_{3}$ to the $0$-th component of the main
term. We write this as $\| 2,f_{2}f_{3},0 \|$, which does not mention
the use of the $1$st component in an intermediate step. Since all
we needed was that the component not be used elsewhere, this is reasonable.
So we will assume that indices such as $a$, $b$
and so on are always chosen to minimize {\em collisions}. This
means that no indices will be equal unless they are explicitly represented
with equivalent expressions. This can be easily achieved by appropriate
choices of values for the indices, and will not jeopardize the finiteness
of any algebras we produce. As long as there are no collisions, algebras
obtained for different values of $a$ will be isomorphic. Accordingly,
we will speak of {\em the} algebra operation $\|2,f_{2}f_{3},0\|$,
 etc.

Extending this idea, we can define the transformation $\| j,\rho, k\|$
that takes the $j$-th component to the $k$-th component along path
$\rho$, where $\rho$ is any non-empty string.

\begin{definition}\label{basic operation def}
If $\rho$ is
$f_{i}$ for some operation symbol $f = f(x_{1},\dots,x_{i},\dots,x_{n})$,
we define $\| j,f_{i},k \|$ by the equation 
$f[k] := x_{i}[j]$.
If $\rho$ is $f_{i}\sigma$ where $\sigma\neq\Lambda$, we define
$\| j,\rho,k \|$ to be $\| j,\sigma,m \| + \| m,f_{i},k \|$, where $m$
is understood to be an index not used elsewhere.
\end{definition}

This definition produces a sum of transformations for possibly many
different operations, which is not a problem. To recover the actual
operations from a sum of transformations, $\Sigma$,
we merely let each operation
be the sum of the transformations and constant vectors that reference
that operation, where operations that are not referenced have constant
value $\langle0,0,0,\dots\rangle$.
We denote the algebra with operations defined this way by 
$\Alg(\Sigma)$.  Our understanding is that only indices with non-zero
components are used in $\Alg(\Sigma)$, so it is finite
and unique up to isomorphism.

The idea is that \ $\|m,\rho,n\|$ \ transfers the
value of the $m$-th component of the vector with path $\rho$ in the
term $s$ to the $n$-th component of the result of $s$, with as
few side effects as possible. We are assuming that none of the indices
used to define \ $\|m,\rho,n\|$ \ is equal to any of the others, except
that possibly \ $m=n$. \ In other words, the operation \ $\|m,\rho,n\|$
\ is {\em duplicate free}. If $m_{1}$ is distinct from both $m_{2}$
and $m_{0}$, and $\mu$ and $\nu$ are strings in \ $\mathit{P}$, \ then
the operation \ $\|m_{2},\nu,m_{1}\|+\|m_{1},\mu,m_{0}\|$ \ is duplicate
free by our convention that indices are chosen to minimize collisions.
In isolation, the sum \ $\|m_{2},\nu,m_{1}\|+\|m_{1},\mu,m_{0}\|$ \ is
equivalent to \ $\|m_{2},\mu\nu,m_{0}\|$. \ The one difference is that
the former explicitly mentions the index $m_{1}$. 

\begin{lemma}\label{path lemma}
Let the algebra operation \ $\|m,\rho,n\|$ \ be duplicate free where 
$\rho \in P$.
Suppose that $t$ is a term, and let $s$ be a subterm of $t$ with path $\rho$. 
Then $t[n]=s[m]$.
\ \end{lemma}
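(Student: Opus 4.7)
The plan is to proceed by induction on $\length(\rho)$, following the recursive structure of Definition~\ref{basic operation def}. The key point throughout is that the duplicate-free hypothesis prevents the various summands in $\|m,\rho,n\|$ from interfering with each other at the components that matter.

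For the base case, $\rho = f_i$ for some $n$-ary operation symbol $f$ and some $1 \leq i \leq n$. Since $s$ is the subterm of $t$ with path $f_i$, we have $t = f(t_1,\dots,t_n)$ with $t_i = s$. By Definition~\ref{basic operation def}, $\|m,f_i,n\|$ is the single equation $f[n] := x_i[m]$, and by the convention that unassigned components are $0$, this is the only contribution to $f[n]$. Evaluating gives $t[n] = t_i[m] = s[m]$.

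For the inductive step, suppose $\rho = f_i\sigma$ with $\sigma \neq \Lambda$. Write $t = f(t_1,\dots,t_n)$; by the definition of paths, $s$ is a subterm of $t_i$ with path $\sigma$. By Definition~\ref{basic operation def}, $\|m,\rho,n\| = \|m,\sigma,p\| + \|p,f_i,n\|$ for some fresh index $p$. The second summand contributes the equation $f[n] := x_i[p]$, which by itself would give $t[n] = t_i[p]$. The essential step is to argue that although $\|m,\sigma,p\|$ may also contribute to various components of $f$ (in the case that $\sigma$ contains occurrences of $f_j$), duplicate-freeness guarantees that every output index appearing inside $\|m,\sigma,p\|$ is distinct from $n$, so none of these extra contributions land in $f[n]$. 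Hence $t[n] = t_i[p]$. One then notes that $\|m,\sigma,p\|$ is itself duplicate free, since its set of indices is a subset of those of $\|m,\rho,n\|$, and applies the inductive hypothesis to $t_i$ and its subterm $s$ with path $\sigma$ to get $t_i[p] = s[m]$. Combining, $t[n] = s[m]$.

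The main obstacle is the bookkeeping in the inductive step: the operations in $\Alg(\|m,\rho,n\|)$ are sums of component-wise equations, and I need to rule out spurious writes polluting $f[n]$, and inductively $f[p]$ and the intermediate components, as one descends. This is exactly what duplicate-freeness was designed for, and the verification amounts to observing that the chosen indices at each recursive level are fresh and distinct from all previously chosen indices. Once this is in hand, both the base case and the recursive unfolding are essentially forced by the definitions.
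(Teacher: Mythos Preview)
Your proof is correct and follows essentially the same approach as the paper's: induction on the length of $\rho$, with the duplicate-free hypothesis ensuring that the summands of $\|m,\rho,n\|$ do not interfere at the relevant components. The one minor difference is that you peel off the \emph{first} symbol of $\rho$ (writing $\rho = f_i\sigma$, which matches Definition~\ref{basic operation def} directly), whereas the paper peels off the \emph{last} symbol (writing $\rho = \psi\rho_j$ and applying the induction hypothesis to $\|b,\psi,n\|$); both decompositions are valid and lead to the same argument.
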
 

\begin{proof} We let \ $\rho=\rho_{0}\rho_{1}\rho_{2}\cdots \rho_{j}$,
where each of the $\rho_{i}$ is a subscripted basic operation symbol.
We will prove the lemma by induction on $j$. Our basis is when $j=0$,
making the operation \ $\|m,\rho_{0},n\|$. \ Assume that $\rho_{0}$ is
$f_{i}$ and $f$ is $k$-ary, so $s$ is 
$f(u_{1},\dots u_{i-1},t,u_{i+1},\dots u_{k})$,
where the $u_{j}$ are terms. The only assignment to $t[n]$ is then
made by $f[n]:=x_{i}[m]$\ , \ giving \ $t[n]=f[n]=x_{i}[m]=s[m]$,
\ as desired. For the induction step, assume the statement is
true for \ $j-1$, \ and that we want to show it for the path \ 
$\rho=\rho_{0}\rho_{1}\rho_{2}\cdots \rho_{j}$.
\  We write \ $\|m,\rho,n\|$ \ as \ 
$\|m,\rho_{j},b\|+\|b,\rho_{0}\rho_{1}\rho_{2}\cdots \rho_{j-1},n\|$
\ for some new index $b$, and let $\psi$ be \ 
$\rho_{0}\rho_{1}\cdots \rho_{j-1}$,
\ so \ $\rho=\psi\rho_{j}$. \ 
By the statement for \ $j-1$, \ $t[n]=u[b]$,
where $u$ is the subterm of $s$ with path $\psi$. \ We assume $\rho_{j}$
is $f_{i}$ , and have $u=f(v_{1},\dots v_{i-1},t,v_{i+1},\dots v_{k})$,\ where
$f$ is $k$-ary and the $v_{j}$ are terms. Then $u[b]=f(v_{1},\dots v_{i-1},t,v_{i+1},\dots v_{k})[b]=s[m]$,
since indices are chosen to minimize collisions and $\|m,\rho_{j},b\|$
is the only summand that assigns a value to the $b$-th component. Thus
\ $t[n]=u[b]=s[m]$, as desired. 
\end{proof} 

Given the algebra
operation $\|m,\rho,n\|$, we define the {\em tweaked} operation
$\|m,\rho,n\|'$ to be identical to $\|m,\rho,n\|$ except for
one assignment. Writing $\rho$ as $\psi\rho_{j}$, we assume that $\rho_{j}$
is $f_{i}$, for some basic operation $f$ and some $i$ less than or 
equal to the arity of $f$.  Then $\|m,\rho,n\|$ \ has an assignment of
the form \ $f[b] := x_{i}[m]$. We modify it by adding $1$, giving
\ $f[b] := (x_i[m]+1) \mod2$ in the definition of $\|m,\rho,n\|'$,
and keeping the rest of $\|m,\rho,n\|$ unchanged. 
A slight modification of the
proof of the previous lemma then establishes the following. 

\begin{lemma}\label{tweaked lemma}
Let the tweaked algebra operation $\|m,\rho,n\|'$ be duplicate
free, and let $t$ be an algebra term and let $s$ be a subterm of $t$,
where $\rho$ is the path of $s$ in $t$.  Then $t[n] = (s[m]+1) \mod2$.
\end{lemma}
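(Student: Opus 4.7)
The plan is to imitate the proof of Lemma~\ref{path lemma} almost verbatim, noting that the tweak affects only the innermost assignment in the decomposition of $\|m,\rho,n\|$, so the extra ``$+1$'' is only felt once, in the base case of the induction; everything else in the argument carries over unchanged.

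Write $\rho = \rho_0 \rho_1 \cdots \rho_j$ where each $\rho_i$ is a subscripted basic operation symbol, and induct on $j$. In the base case $j=0$, we have $\rho = \rho_0 = f_i$ for some basic operation $f$, and $\|m,f_i,n\|'$ consists of the single assignment $f[n] := (x_i[m]+1)\mod 2$. If $t$ is a term with $s$ as its subterm at path $f_i$, then $t = f(u_1,\ldots,u_{i-1},s,u_{i+1},\ldots,u_k)$ and, reading off the assignment, $t[n] = f[n] = (s[m]+1)\mod 2$, as required.

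For the inductive step, decompose $\|m,\rho,n\|' = \|m,\rho_j,b\|' + \|b,\rho_0\cdots\rho_{j-1},n\|$ for a fresh index $b$. By construction of the tweak, the prime sits on the inner summand: the tweak modifies exactly the innermost assignment $f[b] := x_i[m]$ produced by $\|m,\rho_j,b\|$, while the outer summand is an untweaked path operation. Let $u$ be the subterm of $t$ at path $\rho_0\cdots\rho_{j-1}$, so that $s$ is the subterm of $u$ at path $\rho_j$. Applying Lemma~\ref{path lemma} to the outer (untweaked) summand yields $t[n] = u[b]$, and the base case above, applied to the tweaked inner summand relative to $u$ and $s$, yields $u[b] = (s[m]+1)\mod 2$. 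Combining gives $t[n] = (s[m]+1)\mod 2$.

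The main (and essentially only) obstacle I anticipate is the bookkeeping around duplicate-freeness: one must check that under the ``no collisions'' convention, the two summands in the decomposition assign to disjoint sets of components other than the bridging index $b$, so that they combine additively without interaction and, in particular, without cancelling the ``$+1$'' introduced by the tweak. This is inherited directly from the convention that made Lemma~\ref{path lemma} go through, so no new idea is required beyond the one modified equation in the base case.
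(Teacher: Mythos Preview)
Your proposal is correct and is exactly the ``slight modification of the proof of the previous lemma'' that the paper invokes in lieu of a written-out argument. Your observation that the tweak lives only on the innermost summand $\|m,\rho_j,b\|$, so that the outer summand is an ordinary (untweaked) path operation to which Lemma~\ref{path lemma} applies verbatim, is precisely the point; the duplicate-freeness bookkeeping you flag is handled by the same no-collision convention as before.
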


\end{section}

\begin{section}{Algebras separating terms}   


\begin{lemma}\label{no s or t lemma}
If the equivalence class of $s$ and $t$ under $\equiv$ is larger than $\{ s,t \}$,
then  $s$ and $t$ are separated in a finite algebra.
\end{lemma}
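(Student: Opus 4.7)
My plan is to first establish that the hypothesis $|[s]_{\equiv}| > 2$ forces one of $s, t$ to be a proper subterm of the other, and then build the separating algebra by a single tweaked transfer from Section~2.

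\emph{Structural step.} I would argue that new terms can enter the class $[s]_{\equiv}$ only via the \emph{Decompose} rule, since \emph{Transitive} merely chains equivalences whose terms already belong to the same class. A \emph{Decompose} step applied to some previously derived $a \equiv b$ at path $\rho$ produces $a_\rho \equiv b_\rho$, and this enlarges $[s]_{\equiv}$ only when at least one of $a$, $b$, $a_\rho$, $b_\rho$ is already in $[s]_{\equiv}$ together with the other being outside. Chasing this condition back through the derivation tree, I expect to conclude by induction on derivation length that the only way for a third term $r$ to reach $[s]_{\equiv}$ is for there to be a nonempty path $\rho$ at which $s$ is a proper subterm of $t$, or vice versa. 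By symmetry, assume $t$ is a proper subterm of $s$ at some nonempty $\rho$.

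\emph{Algebra step.} Let $\Sigma$ be the single tweaked operation $\|0, \rho, 0\|'$ of Definition~\ref{basic operation def}. Applying Lemma~\ref{tweaked lemma} with outer term $s$ and subterm $t$ at path $\rho$ gives, for every assignment of vectors to the variables of $s$ and $t$, the identity $s[0] = (t[0]+1) \bmod 2$ in $\Alg(\Sigma)$. Hence $s[0] \neq t[0]$ for every tuple of values, so $s$ and $t$ are separated in $\Alg(\Sigma)$. The algebra is finite by the standing convention that only the finitely many indices explicitly used (here the index $0$ and the auxiliary indices along $\rho$ introduced by Definition~\ref{basic operation def}) ever hold nonzero values.

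\emph{Main obstacle.} The delicate part is the structural step: verifying that $|[s]_{\equiv}| > 2$ truly forces direct proper-subterm containment and is not compatible with an indirect enlargement of the class via a long chain of \emph{Decompose} steps first descending into an unrelated class and then somehow returning. To rule this out I expect to need a careful induction on the length of a minimal derivation of $s \equiv r$, using the fact that each \emph{Decompose} step produces strictly smaller subterms while \emph{Transitive} produces no new terms at all, so any path by which a fresh element reaches $[s]_{\equiv}$ must bottom out at a \emph{Decompose} applied directly to $s \equiv t$ at a position where $s$ occurs as a subterm of $t$ or conversely.
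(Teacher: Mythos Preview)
Your overall plan matches the paper's: reduce the hypothesis to ``one of $s,t$ is a proper subterm of the other'' and then separate with $\Alg(\|0,\rho,0\|')$ via Lemma~\ref{tweaked lemma}. The algebra step is exactly what the paper does.

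The structural step, however, is where your sketch drifts. Two points:

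\textbf{(1) You are missing the key structural fact.} The paper first observes (and proves by a one-line induction) that every term appearing in a non-trivial derived equivalence is a \emph{subterm of $s$ or of $t$}. Your plan instead tries to ``chase back'' until the derivation ``bottoms out at a \emph{Decompose} applied directly to $s\equiv t$''. That is not what happens in general: the last \emph{Decompose} producing $s\equiv r$ may come from some intermediate $p\equiv q$, not from $s\equiv t$ itself. What saves you is precisely the subterm fact: since $s$ is a proper subterm of $p$ and $p$ must be a subterm of $s$ or of $t$, the first option is impossible, so $p$ is a subterm of $t$ and hence $s$ is a proper subterm of $t$. Without stating and using this fact, your induction has no leverage in the \emph{Decompose} case.

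\textbf{(2) Your dismissal of \emph{Transitive} is too quick.} Saying it ``merely chains equivalences whose terms already belong to the same class'' is morally right but not an argument. The paper makes it precise by taking a \emph{minimal-length} derivation of $s\equiv r$ (or $t\equiv r$) with $r\notin\{s,t\}$: if the last step is \emph{Transitive} via $s=p_1\equiv p_2\equiv\cdots\equiv p_n=r$, then either $p_2=t$ (and the tail is a shorter chain to $r$) or $p_2\notin\{s,t\}$ (and $s\equiv p_2$ already witnessed a third element with a shorter derivation). Either way minimality is contradicted. You should build exactly this minimality argument into your induction rather than waving it away.

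With those two fixes your proof becomes the paper's proof. (The paper also disposes of the degenerate case $s=t$ separately; you should note it too.)
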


\begin{proof}
Let $s$ and $t$ be given, and let $\equiv$ be the relation on terms generated from
$s \equiv t$.  Note that only terms that are subterms of $s$ or $t$ are 
equivalent to terms they are not equal to.  (This is easily proved by induction on 
the length of deductions.)

If $s$ is a proper subterm of $t$ or $t$ is a proper subterm of $s$, then we can 
separate $s$ and $t$ in a finite algebra as follows.  Without loss of generality, 
assume $s$ is a proper subterm of $t$, and let $\rho$ be the path of $s$ in $t$.
Then in $\Alg( \| 0,\rho,0 \|' )$, we have $t[0] = s[0] + 1$ by 
\ref{tweaked lemma}, showing that $s$ and $t$
are separated.  If $s = t$, then all the $\equiv$ classes are singletons, and the 
lemma also holds.

So assume neither $s$ nor $t$ is a subterm of the other, and let $E$ be the equivalence
class of $s$ and $t$ under $\equiv$.  Suppose 
$E$ is larger than $\{s,t\}$, so there is some $r \in E$ with 
$r \neq s$ and $r \neq t$, where we assume that the deduction $D$ of $r \equiv s$ or
$r \equiv t$ that shows this is of minimal length for all deductions giving 
$r \equiv s$ or $r \equiv t$ for some term $r$ in $E$ other than $s$ or $t$.

Now $D$ can not be the deduction $\langle s \equiv t \rangle$, so the deduction must end 
with $s \equiv r$ or $t \equiv r$, which is obtained by applying a deduction rule.  This 
rule must be {\em Transitive} or {\em Decompose}, since the other two rules only
produce $\mathbf{False}$.

If $r \in E$ is obtained by an application 
of {\em Transitive}, we assume without loss of generality that $s \equiv r$ is deduced,
and that the chain of equivalences used is as short as possible.
Then for some $n \geq 3$ there are $p_1, \dots p_n$ with 
$s = p_1$, $r = p_n$, and 
where the statements $p_1 \equiv p_2$, $p_2 \equiv p_3$, and so on are already deduced,
where none of these statements is trivial.  If $p_2$ is $t$, then the chain 
$p_2 \equiv p_3 \equiv \dots p_n = r$ is shorter than the original one, contradicting
our assumption.  Thus $p_2$ must not be in $\{ s,t \}$, contradicting our assumption that 
the deduction was of minimal length.

In the case where $r \in E$ is obtained by an application 
of {\em Decompose}, we assume without loss of generality that $s \equiv r$ is deduced
from some earlier statement $p \equiv q$, where $s$ is a proper subterm of $p$ and
$r$ is a proper subterm of $q$.  Now $p$ can not be a subterm of $s$, for then
$s$ would be a proper subterm of itself.  Thus $p$ must be a subterm of $t$, making $s$ a
subterm of $t$, which is also a contradiction.
\end{proof}

In view of this lemma, we will henceforth assume that the only statement involving
$s$ or $t$ that can be deduced is $s \equiv t$ itself.

Another special case is dealt with by the following lemma.

\begin{lemma}\label{s and t not variable lemma}
If $s$ or $t$ is a variable and $s$ and $t$ can not be unified, 
then  $s$ and $t$ are separated in a finite algebra.
\end{lemma}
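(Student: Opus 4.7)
The plan is to reduce the statement directly to Lemma \ref{tweaked lemma}. Without loss of generality, take $s = x$ to be the variable. The first step is to identify why non-unifiability forces $x$ to appear as a proper subterm of $t$: if $x$ did not occur in $t$ at all, the substitution $x \mapsto t$ would unify $s$ and $t$; if $t$ were the variable $x$ itself, the empty substitution would already unify them; and if $t$ were a different variable $y$, then $x \mapsto y$ would work. So $t$ must be a non-variable term with $x$ occurring at some nonempty path $\rho$.

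Once $\rho$ is in hand, I would simply form the finite algebra $\Alg(\|0,\rho,0\|')$. Applying Lemma \ref{tweaked lemma} to the term $t$ with subterm $x$ at path $\rho$ yields $t[0] = (x[0]+1) \bmod 2$ under every vector assignment to the variables, while $s = x$ evaluates so that $s[0] = x[0]$. These two values can never agree, so $s$ and $t$ are separated in this finite algebra.

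The main obstacle is the occur-check step in the first paragraph. Its justification is routine (any substitution $\sigma$ sends $s = x$ to $\sigma(x)$ and $t$ to $\sigma(t)$, and if $x$ appears properly in $t$ then $\sigma(t)$ strictly contains $\sigma(x)$ as a proper subterm, so the two resulting terms have different sizes), but it is worth stating explicitly because it is the only place where the non-unifiability hypothesis is actually used. After that, Lemma \ref{tweaked lemma} carries the whole argument, since the convention that indices minimize collisions automatically ensures that $\|0,\rho,0\|'$ is duplicate free.
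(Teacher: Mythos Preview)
Your proposal is correct and follows essentially the same route as the paper's proof: reduce by the occur-check to the situation where the variable $x$ appears at a nonempty path $\rho$ in the other term, and then separate the two terms in $\Alg(\|0,\rho,0\|')$ via Lemma~\ref{tweaked lemma}. The only cosmetic differences are that the paper names the variable $t$ rather than $s$, and that you spell out the subcases $t=x$ and $t=y$ explicitly (the latter is already covered by ``$x$ does not occur in $t$'').
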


\begin{proof}
Without loss of generality, assume that $t$ is the variable $x$.
If $x$ does not appear in $s$, then $s$ and $t$ can be unified by substituting
$s$ for $x$.  So assume that $x$ occurs in $s$, and let $\rho$ be the path of
this occurrence.  Then $s$ and $t$ are separated in 
$\Alg( \| 0,\rho,0 \|')$,
for $s[0] = x[0] + 1$ and $t[0] = x[0]$ there.
\end{proof}

We will henceforth assume that neither $s$ nor $t$ is a variable.  This is important,
because components of non-variable subterms are $0$ everywhere in $\Alg(L)$, 
except where values are explicitly assigned to them by summands of $L$.


\begin{lemma}\label{sums for k and 0 lemma}
If $p \equiv q$ can be deduced from $s \equiv t$, then there is an index $k$ such that
there is a finite sum of transformations $L$ so that $p[k] + q[k] = s[0] + t[0]$
in $\Alg(L)$.  If $p \equiv q$ is $s \equiv t$, then $k$ must be $0$, and if 
$p \equiv q$ is not $s \equiv t$, then $k$ can have any nonzero value.
\end{lemma}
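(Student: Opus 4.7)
I plan to prove this by induction on the length of the deduction of $p \equiv q$ from $s \equiv t$, strengthening the statement so that for any deducible $p \equiv q$ other than $s \equiv t$ and any specific nonzero $k$, one finds a witnessing $L$. The base case where $p \equiv q$ is $s \equiv t$ is handled by $L = \emptyset$ and $k = 0$, since every operation in $\Alg(\emptyset)$ returns the zero vector and both sides of the identity vanish.

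In the inductive step only the rules {\em Decompose} and {\em Transitive} need be treated, since {\em Conflict} and {\em Cycle} produce $\mathbf{False}$. For {\em Decompose}, where $p \equiv q$ arises from $a \equiv b$ along a path $\rho$ with $p = a_\rho$ and $q = b_\rho$, I fix the target nonzero $k$, let $k_1$ be the index supplied by the inductive hypothesis applied to $a \equiv b$, and set
\[
L \;=\; L_1 \;+\; \|k,\,\rho,\,k_1\|,
\]
with the intermediate indices of the new summand chosen fresh under the no-collisions convention. By Lemma~\ref{path lemma} the new summand propagates the $k$-component of the $\rho$-subterm of any term up to the $k_1$-component of that term's root: in particular it injects $p[k]$ and $q[k]$ into $a[k_1]_L$ and $b[k_1]_L$ respectively. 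The same propagation is picked up by $L_1$'s deeper reads of $k_1$-components in the computations of $s[0]$ and $t[0]$, so the inductive identity $a[k_1] + b[k_1] = s[0] + t[0]$ carried over from $\Alg(L_1)$ rearranges in $\Alg(L)$ into $p[k] + q[k] = s[0] + t[0]$. For {\em Transitive}, where $p \equiv q$ is obtained from a chain $p = a_1 \equiv a_2 \equiv \cdots \equiv a_n = q$, I fix a nonzero $k$, apply the inductive hypothesis to each link to get $L_i$ (with common target index $k$), and take $L = \sum_i L_i$ with intermediate indices disjoint across summands. The no-collisions convention makes the cross terms in the multilinear expansion of $s[0]_L$, $t[0]_L$, $p[k]_L$, and $q[k]_L$ vanish, so each inductive identity survives in $\Alg(L)$; summing the $n-1$ identities modulo $2$ and telescoping the middle $a_i[k]$'s then gives $p[k] + q[k] = s[0] + t[0]$.

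The principal obstacle I expect is the bookkeeping in the {\em Decompose} step: verifying that the new summand $\|k, \rho, k_1\|$ combines with $L_1$ in the symmetric way required to convert the inductive identity into the new one, rather than simply adding $p[k] + q[k]$ to both sides. The key observation is that the propagation introduced by $\|k, \rho, k_1\|$ acts not only on $a$ and $b$ but on every subterm of $s$ and $t$ whose root matches the first step of $\rho$; consequently $L_1$'s reads of $k_1$-components at deeper levels of $s$ and $t$ acquire parallel contributions that align with the identity to yield the desired equation. Choosing $k$ distinct from $k_1$ and disjoint from all indices used internally by $L_1$, together with the no-collisions convention, keeps these perturbations cleanly additive and the argument tractable.
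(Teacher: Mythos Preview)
Your approach is essentially identical to the paper's: both induct on the length of a minimal deduction, take $L$ empty with $k=0$ as the base case, set $L=L_1+\|k,\rho,k_1\|$ in the {\em Decompose} step, and set $L=\sum_i L_i$ with a common nonzero $k$ in the {\em Transitive} step, finishing with the same $\mathbb{Z}_2$-telescoping. The one point you should make explicit is that in the {\em Transitive} case none of the links $a_i\equiv a_{i+1}$ can be $s\equiv t$ itself---this is precisely the standing assumption the paper invokes after Lemma~\ref{no s or t lemma}, and without it the inductive hypothesis would force $k=0$ for that link and block your choice of a common nonzero $k$.
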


\begin{proof}
We prove this by induction on the minimal length of a deduction 
of $p \equiv q$.  Our basis is where
$p \equiv q$ is $s \equiv t$.  In this case we let $L$ be the zero vector and take $k = 0$.

For the induction step, let $p \equiv q$ be different from $s \equiv t$, and let 
$D$ be a minimal deduction of $p \equiv q$.  Then $p \equiv q$ must be deduced 
in the last step of $D$ by using either {\em Decompose} or {\em Transitive}, giving us two cases.

Assume $p \equiv q$ is deduced by {\em Decompose} from the statement
$u(\dots, p, \dots) \equiv u(\dots, q, \dots)$, where $u$ is some term such that
$p$ and $q$ have the same path $\rho$ in $u$. 
If $u(\dots, p, \dots) \equiv u(\dots, q, \dots)$ is $s \equiv t$, we let 
$k \neq 0$ be given and take $L$ to be $\| k,\rho,0 \|$.  Then 
$p[k] + q[k] = s[0] + t[0]$ by Lemma \ref{path lemma}.

So assume $u(\dots, p, \dots) \equiv u(\dots, q, \dots)$ is not $s \equiv t$,
and let any nonzero index $k$ be given.  By our induction hypothesis, 
there is a nonzero index $m \neq k$
and a sum of transformations $L'$ with 
$u(\dots, p, \dots)[m] + u(\dots, q, \dots)[m] = s[0] + t[0]$ in $\Alg(L')$.
We now let $L$ be $L' + \| k, \rho,m \|$, and consider the value of 
$p[k] + q[k]$ in $\Alg(L)$.  We have 
$p[k] + q[k] = u(\dots, p, \dots)[m] + u(\dots, q, \dots)[m]$ 
there, since this is true in $\Alg(\| k, \rho,m \|)$ and summands of 
$L'$ make no assignments to $k$-th or $m$-th components.
We also have $u(\dots, p, \dots)[m] + u(\dots, q, \dots)[m] = s[0] + t[0]$
in $\Alg(L)$, since this is true in $\Alg(L')$ and because $\| k, \rho,m \|$ makes no assignments to $0$-th components.  
So $p[k] + q[k]  = u(\dots, p, \dots)[m] + u(\dots, q, \dots)[m]) =
s[0] + t[0])$ in $\Alg(L)$.

Now assume $p \equiv q$ is deduced  from
$p_1 \equiv p_2, p_2 \equiv p_3, \dots p_m \equiv p_{m+1}$
by {\em Transitive}, where 
$p_1$ is $p$, $p_{m+1}$ is $q$, and $p_i \equiv p_{i+1}$ is in $D$ for all $i$.
By our assumption after Lemma \ref{no s or t lemma}, we have that none of
the statements $p_i \equiv p_{i+1}$ is $s \equiv t$.  Thus given any nonzero
index $k$, we have the following sums of terms for $1 \leq i \leq m$.
We let $L_i$ be such that $p_i[k] + p_{i+1}[k] = s[0] + t[0]$ in 
$\Alg(L_i)$.  As usual, we assume when $i \neq j$ that $L_i$ and $L_j$ 
have no indices other than $0$ and $k$ that they both reference.
Letting $L = L_1 + L_2 + \dots L_{m}$, we have in $\Alg(L)$
that $s[0] + t[0]$ is equal to the sum
$(p_1[k] + p_2[k]) + (p_2[k] + p_3[k]) + \dots (p_{m}[k] + p_{m+1}[k])$.
This is because none of the $L_i$ assigns values to $k$-th components, so 
the only interaction between terms in the various $L_i$ will be at  
$0$-th components, where they assign values to $s[0]$ and $t[0]$.
We are working modulo $2$, so all of the middle terms cancel, 
giving  $s[0] + t[0] =
p_1[k] + p_2[k] + p_2[k] + p_3[k] + \dots p_{m}[k] + p_{m+1}[k] =
p_1[k] + p_{m+1}[k] = p[k] + q[k]$ in $\Alg(L)$.
\end{proof}

\begin{theorem}\label{separation theorem} Let $s$ and
$t$ be terms which can not be unified.  Then $s$ and $t$
are separated in a finite algebra. \end{theorem}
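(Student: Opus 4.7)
The plan is to build on the reductions provided by Lemmas \ref{no s or t lemma} and \ref{s and t not variable lemma}, so we may assume that the equivalence class of $s$ and $t$ under $\equiv$ is exactly $\{s,t\}$ and that neither $s$ nor $t$ is a variable. Since $s$ and $t$ cannot be unified, there is a deduction of $\mathbf{False}$ from $s \equiv t$; the final step of this deduction must apply either \emph{Conflict} or \emph{Cycle}, and I would treat the two cases separately.

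In the Conflict case, the rule is applied to a deduced statement $p \equiv q$ in which $p$ has top operation $f$, $q$ has top operation $g$, and $f \neq g$. I would invoke Lemma \ref{sums for k and 0 lemma} on this statement to obtain a sum of transformations $L$ and an index $k$ with $p[k] + q[k] = s[0] + t[0]$ in $\Alg(L)$ (taking $k=0$ if $p \equiv q$ is literally $s \equiv t$, and otherwise any nonzero $k$). Since $p$ and $q$ have different top operations, I would then augment $L$ by a single summand that sets $f[k] := 1$ as a constant. This shifts every $f$-output's $k$-th component by $1$, so that $p[k]$ gains a $+1$ while $q[k]$ is unchanged. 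The collision-free index conventions built into the construction of $L$ should ensure that this modification does not unexpectedly feed back into the transfer chain, so that $s[0] + t[0] = p[k] + q[k] = 1$ in the augmented algebra, separating the terms.

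In the Cycle case, the rule is applied to statements $p_1 \equiv q_1, \dots, p_n \equiv q_n$ with $q_i$ a subterm of $p_{i+1}$ along a path $\rho_i$ (indices taken modulo $n$, so $p_{n+1} = p_1$) and at least one subterm relation proper. For each $i$, I would apply Lemma \ref{sums for k and 0 lemma} with a fresh nonzero index $k_i$ to obtain $L_i$ with $p_i[k_i] + q_i[k_i] = s[0] + t[0]$. For each non-proper cycle step I would add a transfer $\|k_i,\rho_i,k_{i+1}\|$, and for each proper step I would use the tweaked variant $\|k_i,\rho_i,k_{i+1}\|'$ supplied by Lemma \ref{tweaked lemma}, yielding the identity $p_{i+1}[k_{i+1}] = q_i[k_i] + \epsilon_i$ (with $\epsilon_i = 1$ iff the step was tweaked). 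Combining all of these into a single sum $L$ (keeping intermediate indices disjoint), the lemma equations together with the cycle identities telescope cyclically modulo $2$ to give $n(s[0] + t[0]) \equiv \sum_i \epsilon_i \pmod{2}$. Arranging that $\sum_i \epsilon_i$ is odd then forces $s[0] + t[0] = 1$, which separates $s$ and $t$ in the resulting finite algebra.

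The main obstacle I anticipate is the Cycle case when the minimal cycle has even length $n$, since the telescoped equation collapses to $0 \equiv \sum_i \epsilon_i \pmod{2}$ and yields no constraint on $s[0] + t[0]$, while placing an odd number of tweaks makes the combined equations inconsistent. I expect the resolution to involve either a preprocessing step that exploits non-proper subterm relations (collapsing them via \emph{Transitive} to obtain a minimal cycle of a cooperative length) or a more delicate placement of additional tweaks inside the individual $L_i$'s so that the sum on the right-hand side of the telescoped equation acquires the desired parity regardless of $n$.
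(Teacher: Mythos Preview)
Your Conflict case is essentially the paper's argument. The Cycle case, however, contains one miscalculation and one genuine gap.

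\textbf{The miscalculation.} The equations $p_i[k_i]+q_i[k_i]=s[0]+t[0]$ hold in the \emph{separate} algebras $\Alg(L_i)$, not simultaneously in $\Alg(L)$. When you form $L=L_1+\dots+L_n+(\text{transfers})$, every $L_i$ feeds into the $0$-th component, so in $\Alg(L)$ you get a \emph{single} equation
\[
s[0]+t[0]=\sum_{i=1}^{n}\bigl(p_i[k_i]+q_i[k_i]\bigr),
\]
not $n$ copies of it. Substituting $p_i[k_i]=q_{i-1}[k_{i-1}]+\epsilon_{i-1}$ then yields $s[0]+t[0]=\sum_i\epsilon_i$ with no factor of $n$. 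Thus the parity obstruction you describe does not arise: one simply uses exactly one tweak (the paper tweaks only the last transfer $\|k_n,\sigma_n,k_1\|'$), and $s[0]+t[0]=1$ regardless of the length of the cycle. Your proposed fixes for ``even $n$'' are unnecessary.

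\textbf{The real gap.} You assert $p_{i+1}[k_{i+1}]=q_i[k_i]+\epsilon_i$ and implicitly that $q_{i+1}[k_{i+1}]$ is untouched by the transfer summands. But $\|k_i,\sigma_i,k_{i+1}\|$ assigns to the $k_{i+1}$-th component of \emph{any} term that has a subterm at path $\sigma_i$; if $q_{i+1}$ happens to have one, the telescoping fails. The paper rules this out by an extremal choice of cycle: first take the cycle of minimal length (this also forces every subterm relation to be proper), and among those take one maximising the sum of the depths of the $q_i$. If $q_{i+1}$ had a subterm $r$ at path $\sigma_i$, then from $p_{i+1}\equiv q_{i+1}$ one deduces $q_i\equiv r$ by \emph{Decompose} and hence $p_i\equiv r$, producing a strictly shorter cycle (or, when $n=1$, a cycle with strictly greater depth), a contradiction. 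This minimality argument is the missing ingredient in your outline.
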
 

\begin{proof}
Suppose that $s$ and $t$ can not be unified, and let $\equiv$
be the closure of $s \equiv t$ under the unification axiom and
transitivity.  The relation $\equiv$ must either fail to be homogeneous
or fail to be acyclic.

\textbf{Case 1:}  Suppose $\equiv$ is not homogeneous.  

Then there is a minimal deduction
$D$ of $f(\dots) \equiv g(\dots)$, where $f$ and $g$ are different 
operation symbols.  By Lemma \ref{sums for k and 0 lemma}, we have a
sum of transformations $L'$ and an index $k$ so that 
$f(\dots)[k] + g(\dots)[k] = s[0] + t[0]$ in $\Alg(L')$.
Now consider the transformation we will call $\| f,k \|'$, which sets 
the $k$-th component of its output equal to $1$ if the basic operation
is $f$, and otherwise sets it to the default value of $0$. 
Thus adding the transformation $\| f,k \|'$ to $L'$ 
adds $1$ to the $k$-th
component of the output of $f$, and has no other effect.
We let $L$ be $L' + \| f,k \|'$, and claim that $s[0] \neq t[0]$
in $\Alg(L)$, or equivalently, that $s[0] + t[0] = 1$.

If $f(\ldots) \equiv g(\ldots)$ is $s \equiv t$, then we have 
$k = 0$ and $L = \| f,k \|' = \| f,0 \|'$.
Exactly one of $s$ or $t$ is of the form $f(\ldots)$, without loss of generality we assume that $s$ is.  Then $s[0] = 1$ and $t[0] = 0$, 
giving $s[0] + t[0] = 1$.

Now assume that $f(\ldots) \equiv g(\ldots)$ is not $s \equiv t$,
so $k \neq 0$.  The only summand of $L$ that assigns a value to
the $k$-th component is $\| f,k \|'$, so $f(\ldots)[k] = 1$ and
$g(\ldots)[k] = 0$.  Thus 
$s[0] + t[0] = f(\ldots)[k] + g(\ldots)[k] = 1 + 0 = 1$, as desired.

\textbf{Case 2:} Suppose that $\equiv$ is not acyclic.  

Then writing 
$\triangleleft$ for the relation ``is a subterm of'', there is 
a chain of subterms of $s$ or $t$ with 
$p_1 \equiv q_1 \triangleleft p_2 \equiv q_2 \triangleleft p_3 \dots
q_{m-1} \triangleleft p_{m+1} = p_1$, where $m \geq 1$ and at least
one of the subterm relationships is strict.  

For each $i$, the subterm 
relationship $q_i \triangleleft p_{i+1}$ gives us
that we have some term $u_i \in \{s,t\}$, and paths $\rho$ and $\sigma$,
so that $q_i$ has path $\rho\sigma$ in $u_i$ and 
$p_{i+1}$ has path $\rho$ in $u_i$.

We may assume that 
we have the shortest such chain that shows $\equiv$ is not acyclic,
and fix $m$ as this minimal chain length.
With this assumption, all of the subterm relationships in the chain
must be strict.  This is obviously true if $m = 1$, so consider the
case where $m > 1$, and suppose $q_i = p_{i+1}$ for some $i$.  Then 
we have $p_i \equiv q_i = p_{i+1} \equiv q_{i+1}$.  Since $\equiv$
is transitive, $p_i \equiv q_{i+1}$, so we can shorten the chain, a
contradiction.

With $m$ as above, we will also assume that our chain of subterms 
$p_1 \equiv q_1 \triangleleft p_2 \equiv q_2 \triangleleft p_3 \dots
q_{m} \triangleleft p_{m+1} = p_0$ is such that the sum of the 
lengths of the paths of the $q_i$ is as large as possible.
Specifically, we let $\length(\psi)$ denote the length of any path
$\psi$, and for each $i$ let $\rho_i$ be the path of $q_i$ in
whichever of $s$ or $t$ it is a subterm of.
We are then assuming that our 
chain of length $m$ showing $\equiv$ is not acyclic is such that 
$\length(\rho_1) + \length(\rho_2) + \dots \length(\rho_{m})$
is as large as possible for any such chain of length $m$.

Since $p_1 \equiv q_1$, $p_2 \equiv q_2$, and so on up to 
$p_{m} \equiv q_{m}$, Lemma \ref{sums for k and 0 lemma}
gives us non-zero indices $k_1, k_2, \dots k_{m}$ 
and sums of transformations $L_1, L_2, \dots L_{m}$ where
for all $i$ with $1 \leq i \leq m$ we have
$p_i[k_i] + q_i[k_i] = s[0] + t[0]$ in $\Alg(L_i)$.
As usual, we are assuming that $L_i$ and $L_j$ only have the index
$0$ that they both reference when $i \neq j$.

Since $q_i \triangleleft p_{i+1}$ for $1 \leq i \leq m$,
then for each $i$ the subterm $q_i$ has path $\sigma_i$ 
in $p_{i+1}$.  Since all of the subterm relations are strict, 
none of the $\sigma_i$ 
are $\Lambda$.  Now we let $L$ be 
$L_1 + L_2 + \dots L_{m} + \| k_1,\sigma_1,k_2 \| + 
\| k_2,\sigma_2, k_3 \| + \dots
\| k_{m-1}, \sigma_{m-1},k_m \| + \| k_{m},\sigma_{m},k_1 \|'$.

Since none of $\| k_1,\sigma_1,k_2 \| + 
\| k_2,\sigma_2, k_3 \| + \dots
\| k_{m-1}, \sigma_{m-1},k_m \| + \| k_{m},\sigma_{m},k_1 \|'$ 
assign values to $0$-th 
components, the value of $s[0] + t[0]$ comes from 
$L_1 + L_2 + \dots L_{m}$, and since none of the $L_i$ share
indices other than $0$, we get $s[0] + t[0]$ by adding together the
$p_i[k_i] + q_i[k_i]$, giving
$s[0] + t[0] = p_1[k_1] + q_1[k_1] + \dots 
p_{m}[k_{m}] + q_{m}[k_{m}]$.

Now consider any $p_i[k_i] + q_i[k_i]$ for $1 \leq i \leq m$,
where we interpret $i-1$ as $m$ when $i = 1$.
For simplicity, we will work with $\| k_{m},\sigma_{m},k_1 \|$
instead of $\| k_{m},\sigma_{m},k_1 \|'$ for the moment,
and switch to using $\| k_{m},\sigma_{m},k_1 \|'$ 
at the end of the proof.
The only summand of $L$ that assigns to $k_i$-th components is
$\| k_{i-1},\sigma_{i-1}, k_i \|$.  This could in principle give 
both $p_i[k_i]$ and $q_i[k_i]$ nonzero values.  We claim that only
$p_i[k_i]$ is given a nonzero value by
$\| k_{i-1},\sigma_{i-1}, k_i \|$.

Since $q_{i-1}$ is a subterm of $p_i$ with path $\sigma_{i-1}$
for $2 \leq i \leq m$, we have $p_i[k_i] = q_{i-1}[k_{i-1}]$.  Assume 
$q_i[k_i]$ is given a nonzero value.  This must be because $q_i$ 
has a subterm $r$ with path $\sigma_{i-1}$ in $q_i$.  In this case
we have $p_i \equiv q_i$, that $q_{i-1}$ is a subterm of $p_i$ with
path $\sigma_{i-1}$, and that $r$ is a subterm of $q_i$ with path
$\sigma_{i-1}$.  Thus $q_{i-1} \equiv r$ by {\em Decompose},
which also gives us $p_{i-1} \equiv r$ by {\em Transitive}.
Now $r$ is a subterm of $q_i$ and hence a subterm of $p_{i+1}$,
so we have the chain 
$p_1 \equiv q_1 \triangleleft \dots 
p_{i-1} \equiv r \triangleleft p_{i+1} \dots
q_{m} \triangleleft p_{m+1} = p_1$.
If $m$ is greater than $1$, this is a shorter chain, which contradicts 
our assumption.  So assume $m = 1$, which makes $p_i = p_1$ and $q_i = q_1$
for all $i$.  Then our original chain was $p_1 \equiv q_1 \triangleleft p_1$,
but we also have the chain $p_1 \equiv r \triangleleft p_1$ where 
$r$ is a proper subterm of $q_1$, contradicting our assumption that the 
sum of the lengths of the paths to the $q_i$ is as large as possible.

Thus we have 
$p_i[k_i] + q_i[k_i] = q_{i-1}[k_{i-1}] + q_i[k_i]$ for 
$2 \leq i \leq m$.
For $i = 1$ the argument is the same, except that 
$\| k_m,\sigma_m,k_1 \|'$ makes 
$p_1[k_1] + q_1[k_1] = q_m[k_m] + 1 + q_1[k_1]$.
Thus we have
$s[0] + t[0] = (p_1[k_1] + q_1[k_1]) + \dots (p_{m}[k_{m}] + q_{m}[k_{m}])
= (q_{m}[k_{m}] + q_1[k_1] + 1) + (q_1[k_1] + q_2[k_2]) +
\dots (q_{m-1}[k_{m-1}] + q_{m}[k_{m}]) =
2(q_1[k_1] + q_2[k_2] + \dots q_{m}[k_{m}]) + 1 = 1$, since we calculate 
component values modulo $2$.  This shows $s[0]$ is always not equal to $t[0]$
in $\Alg(L)$.
\end{proof} 

As a corollary, we have the following.

\begin{corollary}\label{finite model corollary} 
Whenever the first order sentence $\sigma$ is a universally quantified conjunction
of negated atomic formulas, $\sigma$ is consistent iff it has a finite
model. \end{corollary}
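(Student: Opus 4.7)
My plan is to reduce the corollary directly to Theorem \ref{separation theorem} via a product construction. First I would dispose of the easy direction: if $\sigma$ has a finite model, then $\sigma$ is consistent (a model is a model).

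For the nontrivial direction, I would write $\sigma$ explicitly as
\[
\sigma = \forall x_1 \ldots x_k \bigwedge_{i=1}^{n} \neg \bigl( s_i(x_1,\ldots,x_k) = t_i(x_1,\ldots,x_k) \bigr),
\]
using that in the algebraic setting of this paper the signature consists only of function symbols, so every atomic formula is an equation between terms. Assume $\sigma$ is consistent, and pick any model $\mathbf{A}$. By definition, $\mathbf{A} \models \sigma$ means that for each $i$, the terms $s_i$ and $t_i$ are separated in $\mathbf{A}$. In particular, $s_i$ and $t_i$ are separated in some algebra, so by Theorem \ref{separation theorem} (equivalently, by the equivalence between separability and non-unifiability established in the introduction) there exists a finite algebra $\mathbf{B}_i$ in which $s_i$ and $t_i$ are separated.

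Next I would take the finite product $\mathbf{B} = \mathbf{B}_1 \times \cdots \times \mathbf{B}_n$, using the observation noted in the introduction that a product separates any pair of terms separated in any of its factors. Concretely, for any tuple $\vec{a} \in B^k$ and any $i$, the $i$-th coordinate of $s_i(\vec{a})$ differs from the $i$-th coordinate of $t_i(\vec{a})$ because $\mathbf{B}_i$ separates $s_i$ from $t_i$ on the projected tuple; hence $s_i(\vec{a}) \neq t_i(\vec{a})$ in $\mathbf{B}$. Therefore $\mathbf{B} \models \sigma$, and since $\mathbf{B}$ is a finite product of finite algebras in a common signature, it is a finite model of $\sigma$.

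The main obstacle is essentially nil once Theorem \ref{separation theorem} is available: the corollary is really a packaging statement. The only points worth stating carefully are that the conjunction is finite (the standard convention for first-order sentences), and that the signature is purely functional so atomic formulas reduce to equations between terms — the universal algebra setting in which the paper operates throughout.
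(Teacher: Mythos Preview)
Your argument is essentially the paper's own: apply Theorem~\ref{separation theorem} to each conjunct, then take a finite product. One small discrepancy: you assume the signature is purely functional so that every atomic formula is an equation, but the corollary is stated for arbitrary first-order sentences, and the paper's proof explicitly treats atomic formulas of the form $R(\ldots)$ by interpreting every relation symbol as identically false in the product. This is a one-line fix, not a real gap, but your remark that ``the signature is purely functional'' is not quite accurate for the corollary as stated.
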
 

\begin{proof}
If $\sigma$ has a model, it is consistent.  So assume $\sigma$ is 
consistent.  We have that $\sigma$ is of the form
$(\forall v_1) \dots (\forall v_k)(\neg(\theta_1) \wedge \dots \neg(\theta_n))$,
where each of the $\theta_i$ is an atomic formula.  If $\theta_i$ is of the 
form $R(\dots)$ where $R$ is a relation symbol, we will just interpret $R$ as
always false in our finite model.  This leaves us with the case where
$\theta_i$ is $s_i(v_1, \dots v_k) = t_i(v_1, \dots v_k)$ for terms
$s_i$ and $t_i$.  If $s_i$ and $t_i$ can be unified, then $\neg(\theta_i)$
is not always true
and hence $\sigma$ has no models, and $\sigma$ is not consistent.  So 
$s_i$ and $t_i$ can not be unified, and the theorem gives a finite model 
$A_i$ where $\neg(\theta_i)$ is always true.  
Now we take the product of all these $A_i$,
and interpret all relations in the language as $\mathbf{False}$ in the product, giving a finite model of $\sigma$.
\end{proof}

\end{section}




\end{document}